\makeatletter \@addtoreset{equation}{section} \makeatother
\newtheorem{theorem}{Theorem}[section]
\newtheorem{proposition}{Proposition}[section]
\newtheorem{lemma}{Lemma}[section]
\newtheorem{remark}{Remark}[section]
\begin{document}
\title{\sc Normalized ground state solutions for the fractional Sobolev critical NLSE  with an extra mass supercritical nonlinearity}
\author{{\sc Jiabin Zuo$^{\small\mbox{1}}$, Yuyou Zhong$^{\small\mbox{1}}$, Du\v{s}an D. Repov\v{s}$^{\small\mbox{2,3,4}}$\thanks{{ {{\it E-mail addresses:} zuojiabin88@163.com (J. Zuo), zhongyy@e.gzhu.edu.cn (Y. Zhong), dusan.repovs@guest.arnes.si (D.D. Repov\v{s}).
} }}
\thanks{{{Corresponding author: Du\v{s}an D. Repov\v{s},  dusan.repovs@guest.arnes.si}}}}\\
$^{\small\mbox{1}}${\small School of Mathematics and Information Science, Guangzhou University, 510006 Guangzhou, P.R. China}\\
$^{\small\mbox{2}}${\small Faculty of Education, University of Ljubljana,  1000 Ljubljana, Slovenia}\\
$^{\small\mbox{3}}${\small Faculty of Mathematics and Physics, University of Ljubljana, 1000 Ljubljana, Slovenia}\\
$^{\small\mbox{4}}${\small Institute of Mathematics, Physics and Mechanics, 1000 Ljubljana,  Slovenia}}
\date{}
\maketitle
\begin{abstract}
This paper is concerned with
 existence of  normalized ground state solutions for the mass supercritical  fractional nonlinear Schr\"{o}dinger equation involving a critical growth in the fractional Sobolev sense. The compactness of Palais-Smale sequences is obtained
 by a special technique, which borrows from the ideas of Soave
  (J. Funct. Anal. 279 (6) (2020), art. 1086102020). 
  This paper represents an extension of previously known results, both in the local and the nonlocal cases.

 \medskip

\emph{\it Keywords:}  Normalized solutions; fractional Schr\"{o}dinger equation; mass supercritical; Sobolev critical.\medskip

\emph{\it Mathematics Subject Classification (2020):} 26A33, 35A15, 35B33, 35J20.
\end{abstract}

\section{Introduction}\label{sec1}

~~This paper is devoted to the following fractional Sobolev critical nonlinear Schr\"{o}dinger equation (NLSE)
 in $\mathbb{R}^{N}$ ($N\geq2$)
\begin{equation}\label{p}
\left\{\begin{array}{lll}
(-\Delta)^{s} u=\mu u+|u|^{2^{*}_{s}-2}u+\eta|u|^{p-2}u,  \\
\|u\|^{2}_{L^{2}}=m^{2},
\end{array}\right.
\end{equation}
where $s\in(0,1)$,  $\mu\in\mathbb{R}$ is an unknown real number
 which will appear as a Lagrange multiplier, $2^{*}_{s}$ is the fractional Sobolev critical exponent, $\eta>0$, $p\in(2+\frac{4s}{N},2^{*}_{s})$, $m>0$ is a finite parameter, and $(-\Delta)^{s}$ is the fractional Laplace operator defined by
$$(-\Delta)^{s}u(x)=C(N,s)\lim\limits_{\varepsilon\rightarrow0^{+}}\int_{\mathbb{R}^{N}\backslash
B_{\varepsilon}(x)}\frac{u(x)-u(y)}{|x-y|^{N+2s}}dy,$$
where $C(N, s)$ is the dimensional constant
which
 depends on $N$ and $s$
 (for more details we refer
the interested reader to Di Nezza et al. \cite{edz}).

The fractional Schr\"{o}dinger equation originated from Laskin's work \cite{nl}, and in recent years, the interest in its study has grown
considerably. It is worthwhile and
very  interesting to look for normalized solutions to such equations
which have a prescribed $L^2$-norm, since
they represent the physical view of the
conservation of mass.

Existence and properties of normalized solutions for certain problems strongly depend
on the behavior of combined nonlinearity $g(u)=|u|^{q-2}u+\eta |u|^{p-2}u,$ where $2<p<q<2^{*}_{s}$.
 Zhang et al.
\cite{HZ}
 investigated a class of Sobolev subcritical fractional NLSEs where
  the parameters $p$ and $q$ are in different order,
and they
obtained several
interesting
 results concerning
 existence of normalized solutions.  In particular, when the degree of nonlinearity $g(u)$ exceeds the mass critical index $2+4s/N$, the functional
 turns out to be
  unbounded from below, which makes it
 impossible to  adopt the direct variational method.

 Secchi and Appolloni
 \cite{LA}
 studied existence and multiplicity of ground state normalized solutions for the fractional mass supercritical NLSEs by using the min-max theory under more general assumptions, but in the Sobolev subcritical sense. Under certain conditions on the potential, Peng et al.
 \cite{SP}
 showed that
 NLSE has at least one normalized solution, with the help of a new min-max argument and the splitting lemma for nonlocal version also in the case when the mass is supercritical and Sobolev subcritical.

However, to the best of our knowledge, there are very
few papers
 on normalized solutions of  fractional NLSEs. Moreover, they only consider the Sobolev subcritical case. Therefore it is natural to inquire what difficulties
  appear
  when
   a fractional Sobolev critical nonlinearity is considered.
For example,
 Zhen and Zhang \cite{MZ} investigated a critical fractional NLSE with the $L^2$-supercritical perturbation, but their coefficient of perturbation
was not allowed to be large enough, while in the present paper, $\eta$ can be large
enough
 since $\eta\in [\eta^{*}, +\infty)$. Very recently, almost at the same time as our study, Li and Zou \cite{lq} considered the same problem using the concentration compactness principle for overcoming the lack of compactness. With the above methods and techniques, Zuo and  R\v{a}dulescu \cite{zuo1} investigated the existence and nonexistence of normalized solutions for a class of fractional mass supercritical nonlinear Schr\"{o}dinger coupled systems with Sobolev
critical nonlinearities.
In response to this difficulty, we consider a technical analysis method combined with the Br\'{e}zis-Lieb lemma, which comes from the ideas of Soave~\cite{NS}.

Of course, in
 the case when $s\rightarrow1$, the fractional Laplacian $(-\Delta)^{s}$ reduces to the
  classical Laplace operator $-\Delta$, the literature on the relevant problem  \eqref{p} is very large.
   Here we shall only mention some  key papers which are relevant to our study. Br\'{e}zis and Nirenberg  \cite{hb} presented a
    pioneering work. Later, many made
important progress in this field. For $L^2$-supercritical perturbation $\eta |u|^{p-2}u$,  Soave \cite{NS} made the
 first contribution concerning existence  of normalized solutions for NLSEs in the Sobolev critical case. Next,
   Alves et al. \cite{coa}  obtained a similar result for this kind of NLSEs
     when
      dimension $N\geq5$,
     and
      $\eta$
       large enough.
 In particular, under weaker, more general conditions,  Jeanjean and Lu \cite{LJSS} proved existence  of ground
states and asymptotic behavior of ground state energy with mass change. They also
obtained infinitely
many radial solutions when $N\geq2$, and
established existence  and multiplicity of nonradial
sign-changing solutions for every $N\geq4$.

Inspired by the work mentioned above, we consider in this paper the problem of
existence  of ground state normalized solutions of fractional Sobolev critical NLSEs  with a mass supercritical nonlinearity.  The compactness can be restored combining some of the main ideas of Br\'{e}zis and Nirenberg  \cite{hb} and Jeanjean \cite{lj1}.
In order to introduce the main result of this paper, we first define fractional Sobolev space:
$$H^{s}(\mathbb{R}^{N})=\left\{u\in L^{2}(\mathbb{R}^{N})~\big|~ [u]^{2}_{H^{s}}=\int\int_{\mathbb{R}^{2N}}\frac{|u(x)-u(y)|^{2}}{|x-y|^{N+2s}}dxdy<\infty\right\},$$
in which
the norm is defined by $$\|u\|=(\|u\|^{2}_{L^{2}}+[u]^{2}_{H^{s}})^{\frac{1}{2}}.$$
For convenience, we shall simply denote the norm of Lebesgue space $L^p(\mathbb{R}^{N})$ by $\|u\|_{p}$ for $p\in[1,\infty)$.
A standard method for investigating problem \eqref{p} is to find critical points of the following energy functional
$$I_{\eta}(u)=\frac{1}{2}[u]^{2}_{H^{s}}-\frac{1}{2_{s}^{*}}\|u\|_{2_{s}^{*}}^{2_{s}^{*}}-\frac{\eta}{p}\|u\|_{p}^{p},$$
constrained to the
 set $$S(m)=\left\{u\in H^{s}(\mathbb{R}^{N})\big|\|u\|^{2}_{2}=m^{2}\right\}.$$ Obviously, $I_{\eta}$ is of class $C^{1}$ in $H^{s}(\mathbb{R}^{N})$.

We can now state our main result.
\begin{theorem}\label{T1.1}
Assume that $N\geq2$
and  $p\in(2+\frac{4s}{N},2^{*}_{s})$.  Then  for every $m>0,$ there exists $\eta^{*}=\eta^{*}(m)>0$ such that for every $\eta\geq\eta^{*}$, problem \eqref{p} admits a radial normalized solution $\widetilde{u},$ whose associated Lagrange multiplier $\mu$ is negative.
\end{theorem}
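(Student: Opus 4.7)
The plan is to adapt Soave's scaling-based mountain pass strategy to the fractional setting and to close the compactness gap with a Brezis--Lieb splitting. First, I would introduce the $L^{2}$-preserving scaling $(t\star u)(x):=t^{N/2}u(tx)$ and the fiber map
$$\psi_{u}(t):=I_{\eta}(t\star u)=\frac{t^{2s}}{2}[u]^{2}_{H^{s}}-\frac{t^{N(2^{*}_{s}-2)/2}}{2^{*}_{s}}\|u\|^{2^{*}_{s}}_{2^{*}_{s}}-\frac{\eta\,t^{N(p-2)/2}}{p}\|u\|^{p}_{p}.$$
Since $p>2+4s/N$ gives $N(p-2)/2>2s$ and trivially $N(2^{*}_{s}-2)/2>2s$, both nonlinear exponents dominate the kinetic one at infinity, so $\psi_{u}(t)$ is positive for small $t>0$, tends to $-\infty$ as $t\to\infty$, and a short monotonicity argument on $\psi'_{u}$ yields a unique strict maximum at some $t_{u}>0$. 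This motivates the Pohozaev-type constraint
$$\mathcal{P}(m):=\{u\in S(m):P(u)=0\},\quad P(u):=2s[u]^{2}_{H^{s}}-\tfrac{N(2^{*}_{s}-2)}{2^{*}_{s}}\|u\|^{2^{*}_{s}}_{2^{*}_{s}}-\tfrac{\eta N(p-2)}{p}\|u\|^{p}_{p},$$
on which every critical point of $I_{\eta}|_{S(m)}$ must lie. I would work throughout on the radial subspace $S_{r}(m):=S(m)\cap H^{s}_{rad}(\mathbb{R}^{N})$ so that the compact embedding $H^{s}_{rad}\hookrightarrow L^{p}(\mathbb{R}^{N})$ will handle the subcritical perturbation later.

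Following Jeanjean's trick, I would next produce a Palais--Smale sequence at the mountain pass level
$$c(m):=\inf_{\gamma\in\Gamma(m)}\max_{\tau\in[0,1]}I_{\eta}(\gamma(\tau))>0,$$
with $\Gamma(m)$ the class of continuous paths in $S_{r}(m)$ joining a small-energy endpoint (in the region where $\psi_{u}$ is still increasing) to a scaled endpoint $T\star u_{0}$ with $I_{\eta}(T\star u_{0})<0$. Working on the augmented manifold $\mathbb{R}\times S_{r}(m)$ with the auxiliary functional $(t,u)\mapsto I_{\eta}(t\star u)$ then delivers a sequence $\{u_{n}\}\subset S_{r}(m)$ with $I_{\eta}(u_{n})\to c(m)$, $(I_{\eta}|_{S(m)})'(u_{n})\to 0$, and the additional residue $P(u_{n})\to 0$; combined with $p>2+4s/N$, this last property forces boundedness of $\{u_{n}\}$ in $H^{s}(\mathbb{R}^{N})$.

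The decisive step, and the main obstacle, will be the strict threshold estimate
$$c(m)<\frac{s}{N}(\mathcal{S}^{s})^{N/(2s)},$$
where $\mathcal{S}^{s}$ denotes the best fractional Sobolev constant. To establish it I would test the min-max inequality against a properly $L^{2}$-normalized and rescaled version of the extremal function for $\mathcal{S}^{s}$ and carry out the fractional analogue of the Brezis--Nirenberg expansion; the mass-supercritical correction $-\tfrac{\eta}{p}\|u\|_{p}^{p}$ contributes a strictly negative term whose magnitude grows with $\eta$, so the estimate will hold as soon as $\eta\geq\eta^{*}(m)$ for a suitable $\eta^{*}(m)>0$. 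This is precisely the point where the lower bound on $\eta$ enters the argument.

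Finally, I would recover compactness as follows. Up to a subsequence $u_{n}\rightharpoonup\widetilde{u}$ in $H^{s}_{rad}(\mathbb{R}^{N})$, with $u_{n}\to\widetilde{u}$ in $L^{p}(\mathbb{R}^{N})$ by the radial compact embedding. Applying the fractional Brezis--Lieb lemma to the critical term in the spirit of Soave yields the dichotomy: either $u_{n}\to\widetilde{u}$ strongly in $\dot{H}^{s}$, or the defect of fractional energy $[u_{n}-\widetilde{u}]^{2}_{H^{s}}\to\ell$ satisfies $\ell\geq(\mathcal{S}^{s})^{N/(2s)}$; the second alternative, combined with $P(u_{n})\to 0$, contradicts the strict threshold above. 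Hence $u_{n}\to\widetilde{u}$ strongly, $\widetilde{u}\in S_{r}(m)$ is a radial critical point with some Lagrange multiplier $\mu$, and subtracting a suitable multiple of the Pohozaev identity $P(\widetilde{u})=0$ from the identity obtained by testing the equation against $\widetilde{u}$ gives
$$-\mu\,\tfrac{2s}{N-2s}\|\widetilde{u}\|_{2}^{2}=\eta\,\tfrac{2ps-N(p-2)}{p(N-2s)}\|\widetilde{u}\|_{p}^{p},$$
whose right-hand side is strictly positive because $p<2^{*}_{s}$, yielding $\mu<0$ and completing the proof.
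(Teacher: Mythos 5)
Your overall architecture coincides with the paper's: Jeanjean's scaling on $S_r(m)$, the Pohozaev constraint, boundedness of the Palais--Smale sequence from $P(u_n)\to0$ and $p>2+\frac{4s}{N}$, the radial compact embedding plus a Br\'ezis--Lieb splitting of the critical term, and the sign of $\mu$ from combining the weak equation with $P_\eta(\widetilde u)=0$ (your final identity is exactly the paper's relation $-\mu\,m^2=\eta(1-\zeta_p)\|\widetilde u\|_p^p$). The one point where you genuinely diverge is the threshold estimate $c(m)<\frac{s}{N}\mathcal{S}^{\frac{N}{2s}}$, and there your sketch has a real gap. You propose to test the min-max value against an $L^2$-normalized, rescaled fractional Aubin--Talenti extremal and run a Br\'ezis--Nirenberg expansion. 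But the extremal $U_\varepsilon(x)\sim\varepsilon^{\frac{N-2s}{2}}(\varepsilon^2+|x|^2)^{-\frac{N-2s}{2}}$ lies in $L^2(\mathbb{R}^N)$ only when $N>4s$; for $N\le 4s$ (e.g. $N=2$, $s\ge\frac12$) it cannot be placed on $S(m)$ without truncation, and then the truncation errors compete with the $\|\cdot\|_p^p$ correction and force dimension/exponent restrictions -- this is precisely why the bubble-based results of Zhen--Zhang and Zhang--Han carry such restrictions. Since Theorem \ref{T1.1} claims all $N\ge2$, $s\in(0,1)$, the assertion that ``the estimate will hold as soon as $\eta\ge\eta^*$'' is not justified by the expansion alone over the full range. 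The paper avoids the bubble entirely: because $\eta$ may be taken large, it fixes a single path $\psi_0(t)=[(1-t)\xi_1+t\xi_2]\star u_0$ and shows (Lemma \ref{Lemma3}) that $E_\eta(m)\le\max_{r\ge0}\bigl\{\tfrac12 r^2[u_0]^2_{H^s}-\tfrac{\eta}{p}r^{\frac{N(p-2)}{2s}}\|u_0\|_p^p\bigr\}\le C\eta^{-\frac{4s}{Np-2N-4s}}\to0$ as $\eta\to\infty$, so the threshold holds for $\eta\ge\eta^*(m)$ by an elementary computation valid for every $N\ge2$. Your argument is repaired simply by replacing the bubble expansion with this observation; as written, the bubble route is both unnecessary (you have $\eta$ large at your disposal) and insufficient in low dimensions.

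Two smaller gaps in your compactness step. First, to exclude the non-compact alternative you need more than the splitting inequality $c(m)\ge I_\eta(\widetilde u)+\frac{s}{N}\mathcal{S}^{\frac{N}{2s}}$: you must rule out vanishing ($\widetilde u\equiv0$, which the paper excludes using $0<E_\eta(m)<\frac{s}{N}\mathcal{S}^{\frac{N}{2s}}$) and you must know $I_\eta(\widetilde u)\ge0$, which the paper deduces from $P_\eta(\widetilde u)=0$ (Pohozaev identity for the limit equation), giving $I_\eta(\widetilde u)=\frac{\eta}{2p}(\zeta_pp-2)\|\widetilde u\|_p^p+\frac{s}{N}\|\widetilde u\|_{2_s^*}^{2_s^*}>0$; your phrase ``combined with $P(u_n)\to0$'' gestures at this but does not carry it out. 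Second, strong convergence in $\dot H^s$ together with $L^p$-convergence does not by itself prevent loss of $L^2$-mass: weak convergence only yields $\|\widetilde u\|_2\le m$, so concluding $\widetilde u\in S_r(m)$ requires an extra argument. The paper recovers $u_n\to\widetilde u$ in $L^2$ by testing the Lagrange-multiplier relation \eqref{3.1} with $\phi=u_n-\widetilde u$ and exploiting $\mu<0$; some such step must be added to your sketch.
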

\begin{remark}\label{remark}
 Our conclusion can be regarded as an extension of  Alves et al. \cite[ Theorem 1.1]{coa}.
\end{remark}
\begin{remark}\label{remark}
According to Theorem 1.1 and Zhen and Zhang \cite[ Theorem 1.3:(1)-(2)]{MZ}, we know that existence  of  radial normalized ground state is possible when $\eta$ is small enough or large enough, however it remains an open problem for the rest of the range of $\eta$.
\end{remark}
\begin{remark}\label{remark}
If $N=4s$ or $\frac{p}{p-1}2s<N<4s$ or $2s<N<\frac{p}{p-1}2s$, we can get our conclusion without any restriction on $\eta$, see Zhen and Zhang \cite[ Theorem 1.3(3)]{MZ}. If we further assume that $N^2>8s^{2},$
then we can also arrive at a similar conclusion, but in this case we consider the mass $m$ as the parameter instead of $\eta$, see Zhang and Han \cite[Theorem 1.3]{ph}.
\end{remark}

\noindent The paper is organized as follows. Section \ref{sec2} contains the proofs of  some important lemmas, which play a key role in the proof of the compactness condition.  In
Section \ref{sec3}, we prove the strong convergence  of the
Palais-Smale sequence at  some level set, using a special technique. In Section \ref{sec4}, we prove Theorem \ref{T1.1}.

\section{Main lemmas}\label{sec2}

Although the study of normalized solutions is convenient for applications, it also presents some difficulties. For example, the Nehari manifold method cannot be used because the constant $\mu$ is unknown. This also makes it difficult to verify the boundedness of Palais-Smale sequences when employing some common methods.

To this end, following Soave
 \cite{NS}, let
\begin{equation}\label{add1}
\zeta_{p}=(Np-2N)/2ps,
\
\hbox{for every}
\
p\in (2,2_{s}^{*}],
\end{equation}
it is easy to see that $\zeta_{p}\in(0,1]$ and define the Pohozaev manifold
$$\mathcal{P}_{\eta,m}=\{u\in S(m)|P_{\eta}(u)=0\},
\
\hbox{where}
\
P_{\eta}(u)=[u]^{2}_{H^{s}}-\|u\|_{2_{s}^{*}}^{2_{s}^{*}}-\eta\zeta_{p}\|u\|_{p}^{p},$$
where the definition of $\zeta_{p}$
is related
 to \eqref{FNG}.
It is well-known
 that any critical point of
$I_{\eta}|_{S(m)}$ stays in $\mathcal{P}_{\eta,m}$, as a consequence of Zhen and Zhang \cite[Proposition 2.1 and Remark 2.1]{MZ}.

In order to get the mountain pass geometry, we
are
 going to make the scaling transformation.  For $u\in S(m)$ and $\xi\in \mathbb{R}$, we let
$$(\xi\star u)(x)=e^{\frac{N\xi}{2}}u(e^{\xi}x)=v(x),~\text{for~a.e.~} x\in\mathbb{R}^{N},$$ which is based on a very interesting idea from Jeanjean \cite{lj1}.
A careful analysis shows that the transformed functional $\widetilde{I}_{\eta}=I_{\eta}(\xi\star u)$ has the same mountain pass geometry and mountain pass level as the original functional $I_{\eta}(u)$.

For reader's convenience, we give the proof of the following  lemma, which can also be found in Li and Zou
\cite{lq}.

\begin{lemma}\label{Lemma1}
Assume that $u\in S(m)$ is arbitrary but fixed. Then we have:

\noindent(1)~~$[\xi\star u]^{2}_{H^{s}}\rightarrow0$~\text{and}~ $I_{\eta}(\xi\star u)\rightarrow0,$ ~\text{as}~ $\xi\rightarrow-\infty;$~\\
\noindent(2)~~$[\xi\star u]^{2}_{H^{s}}\rightarrow+\infty$~\text{and}~ $I_{\eta}(\xi\star u)\rightarrow-\infty,$ ~\text{as}~ $\xi\rightarrow+\infty.$~

\end{lemma}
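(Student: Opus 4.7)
First I would compute explicitly how the scaling $(\xi\star u)(x) = e^{N\xi/2}u(e^{\xi}x)$ affects each term of $I_\eta$. The change of variables $y = e^{\xi}x$ immediately gives $\|\xi\star u\|_{2} = \|u\|_{2}$, so $\xi\star u$ stays on $S(m)$. An identical argument shows that, for $q\in(2,2_s^*]$,
$$\|\xi\star u\|_q^q = e^{N\xi(q-2)/2}\|u\|_q^q.$$
For the Gagliardo seminorm, the substitution $(x,y)\mapsto(e^{-\xi}x,e^{-\xi}y)$, combined with the homogeneity $|e^{-\xi}x-e^{-\xi}y|^{N+2s} = e^{-\xi(N+2s)}|x-y|^{N+2s}$, yields
$$[\xi\star u]_{H^s}^{2} = e^{2s\xi}[u]_{H^s}^{2}.$$

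Second, inserting these identities into $I_\eta$ I would obtain the closed-form expression
$$I_\eta(\xi\star u) = \frac{e^{2s\xi}}{2}[u]_{H^s}^{2} - \frac{e^{2_s^{*}s\xi}}{2_s^{*}}\|u\|_{2_s^{*}}^{2_s^{*}} - \frac{\eta\,e^{N(p-2)\xi/2}}{p}\|u\|_p^{p},$$
where I used the elementary identity $N(2_s^{*}-2)/2 = 2_s^{*}s$, which follows from $2_s^{*} = 2N/(N-2s)$. From this expression the two claims reduce to comparing three real exponentials.

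Third, for part (1), as $\xi\to -\infty$ each of the three positive exponents $2s$, $2_s^{*}s$, and $N(p-2)/2$ forces its exponential to tend to $0$; hence $[\xi\star u]_{H^s}^{2}\to 0$ and $I_\eta(\xi\star u)\to 0$. For part (2), as $\xi\to +\infty$ the identity $[\xi\star u]_{H^s}^{2} = e^{2s\xi}[u]_{H^s}^{2}$ immediately gives $+\infty$. To get $I_\eta(\xi\star u)\to -\infty$ I would compare exponents: since $N\geq 2>2s$, one has $2_s^{*}s = 2Ns/(N-2s)>2s$, and the hypothesis $p>2+4s/N$ gives $N(p-2)/2>2s$. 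Therefore both negative terms grow strictly faster than the positive $H^s$ term as $\xi\to+\infty$, and $I_\eta(\xi\star u)\to -\infty$.

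The argument is essentially a direct computation, so I do not expect any genuine obstacle. The only point needing care is the correct $\xi$-scaling of the fractional seminorm, which must be tracked through two changes of variable in the double integral together with the $(N+2s)$-homogeneity of the kernel; it is precisely this calculation that produces the exponents $2s$ and $2_s^{*}s$ on which the exponent comparisons in part (2) rely.
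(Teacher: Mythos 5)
Your proof is correct and follows essentially the same route as the paper: establish the scaling identities $[\xi\star u]_{H^s}^2=e^{2s\xi}[u]_{H^s}^2$ and $\|\xi\star u\|_\beta^\beta=e^{N\xi(\beta-2)/2}\|u\|_\beta^\beta$, then compare the exponents $2s$, $2_s^*s$, and $N(p-2)/2$ using $p>2+\tfrac{4s}{N}$ as $\xi\to\pm\infty$. No gaps; your explicit verification of $N(2_s^*-2)/2=2_s^*s$ is just a slightly more detailed rendering of the same computation.
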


\begin{proof}
By a direct calculation, we get
\begin{equation}\label{j}
\begin{split}
[\xi\star u]^{2}_{H^{s}}&=e^{2\xi s}\int\int_{\mathbb{R}^{2N}}\frac{|u(x)-u(y)|^{2}}{|x-y|^{N+2s}}dxdy=e^{2\xi s}[u]^{2}_{H^{s}},\\ ~~~\|\xi\star u\|_{\beta}^{\beta}&=e^{\frac{(\beta-2)N\xi}{2}}\|u\|_{\beta}^{\beta}, \ \hbox{for every } \  \beta\geq2.
\end{split}
\end{equation}
On the basis of \eqref{j}, we have

$$[\xi\star u]^{2}_{H^{s}}\rightarrow0,~~\hbox{as}~~\xi\rightarrow-\infty,$$
 $$I_{\eta}(\xi\star u)=\frac{1}{2}[\xi\star u]^{2}_{H^{s}}-\frac{1}{2_{s}^{*}}\|\xi\star u\|_{2_{s}^{*}}^{2_{s}^{*}}-\frac{\eta}{p}\|\xi\star u\|_{p}^{p}\rightarrow0,~~\hbox{as}~~\xi\rightarrow-\infty,$$

thereby demonstrating (1).

On the other hand, it follows from \eqref{j} that $[\xi\star u]^{2}_{H^{s}}\rightarrow+\infty,$ ~\hbox{as}~ $\xi\rightarrow+\infty.$
Besides,
\begin{align*}
 I_{\eta}(\xi\star u)=\frac{1}{2}e^{2\xi s}[u]^{2}_{H^{s}}-\frac{1}{2_{s}^{*}}e^{\frac{(2_{s}^{*}-2)N\xi}{2}}\|u\|_{2_{s}^{*}}^{2_{s}^{*}}-\frac{\eta}{p}e^{\frac{(p-2)N\xi}{2}}\|u\|_{p}^{p}\rightarrow-\infty,~~\hbox{as}~~\xi\rightarrow+\infty,
\end{align*}
since $p\in(2+\frac{4s}{N},2^{*}_{s})$, which in turn, demonstrates
 (2). This completes the proof of Lemma \ref{Lemma1}.
\end{proof}

The following two inequalities (fractional Sobolev inequality \eqref{sobolev} and fractional Gagliardo-Nirenberg inequality \eqref{FNG}) play an important role in our proof of the main result in Section \ref{sec4}.

Thanks to Servadei and Valdinoci \cite{RS}, there exists a optimal fractional critical Sobolev constant $\mathcal{S}>0$ such that
\begin{equation}\label{sobolev}
\mathcal{S}\|u\|_{2_{s}^{*}}^{2}\leq [u]^{2}_{H^{s}}, \ \hbox{for every } \ u\in H^{s}(\mathbb{R}^{N}).
\end{equation}
Also, according to Frank et al. \cite{FRL}, there exists an optimal constant $C(N,p,s)$ such that for
every
 $p\in(2,2_{s}^{*})$,
\begin{equation}\label{FNG}
\|u\|_{p}^{p}\leq C^{p}(N,p,s)[u]^{p\zeta_{p}}_{H^{s}}\|u\|_{2}^{p(1-\zeta_{p})},~~\hbox{for every } \ u\in H^{s}(\mathbb{R}^{N}),
\end{equation}
where $\zeta_{p}$ is given by \eqref{add1}.

In the following lemma, we give a
 specific value of $\rho(m,\eta)$ and analyze the asymptotic behavior of $\rho(m,\eta)$ when $\eta$ is large  enough, which is more detailed than in    Luo and Zhang \cite[Lemma 5.2]{HZ}.

 Let $$S_{r}(m)=S(m)\bigcap H^{s}_{rad}(\mathbb{R}^{N})=\{u\in S(m): u(x)=u(|x|)\}.$$

\begin{lemma}\label{Lemma2}
There exists
a
 small enough
 $\rho(m,\eta)>0$  such that $$0<\inf\limits_{u\in X} I_{\eta}(u)\leq\sup\limits_{u\in X} I_{\eta}(u)<\inf\limits_{u\in Y} I_{\eta}(u),$$
with
$$X=\left\{u\in S_{r}(m), [u]^{2}_{H^{s}}\leq\rho(m,\eta)\right\},~ Y=\left\{u\in S_{r}(m), [u]^{2}_{H^{s}}=2\rho(m,\eta)\right\}.$$
Moreover, $\rho(m,\eta)\rightarrow0,$ as $\eta\rightarrow\infty$.
\end{lemma}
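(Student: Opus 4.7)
My plan is to bound $I_\eta(u)$ along $S_r(m)$ purely in terms of $t:=[u]^2_{H^s}$ using the two inequalities \eqref{sobolev} and \eqref{FNG}, and then to tune $\rho(m,\eta)$ so that the mass supercritical hypothesis creates a mountain-pass collar between $X$ and $Y$. The two observations that will drive the argument are: (i) for any $u\in S_r(m)$ both nonlinear terms in $I_\eta(u)$ are dominated by powers of $t$ strictly larger than $1$; (ii) only one of those terms depends on $\eta$, so tuning $\rho$ against this $\eta$-term alone will suffice to force $\rho\to 0$ as $\eta\to\infty$.

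First I would establish the pointwise sandwich
$$g(t)\;\leq\;I_\eta(u)\;\leq\;\frac{t}{2},\qquad g(t):=\frac{t}{2}-\frac{\mathcal{S}^{-2_s^*/2}}{2_s^*}\,t^{2_s^*/2}-\frac{\eta\,C^p(N,p,s)\,m^{p(1-\zeta_p)}}{p}\,t^{p\zeta_p/2}.$$
The upper bound is immediate since $\|u\|_{2_s^*}^{2_s^*},\|u\|_p^p\geq 0$; the lower bound follows from \eqref{sobolev} and \eqref{FNG} after substituting $\|u\|_2=m$. The hypothesis $p>2+\tfrac{4s}{N}$ yields $\tfrac{p\zeta_p}{2}=\tfrac{Np-2N}{4s}>1$, while $\tfrac{2_s^*}{2}=\tfrac{N}{N-2s}>1$ is automatic, so both nonlinear terms are genuinely subleading near $0$ and $g(t)/t\to \tfrac12$ as $t\to 0^+$.

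Next, I would pick $\rho=\rho(m,\eta)>0$ small enough that
$$\frac{\mathcal{S}^{-2_s^*/2}}{2_s^*}(2\rho)^{2_s^*/2}+\frac{\eta\,C^p(N,p,s)\,m^{p(1-\zeta_p)}}{p}(2\rho)^{p\zeta_p/2}\;\leq\;\frac{\rho}{4}.$$
Dividing through by $\rho$ reduces this to a sum of positive powers of $\rho$ (with exponents $\tfrac{2_s^*}{2}-1>0$ and $\tfrac{p\zeta_p}{2}-1>0$) being at most $\tfrac14$, which is solvable for every fixed $\eta>0$. To extract the asymptotic, I would demand in particular that the $\eta$-dependent summand alone be at most $\rho/8$; this yields an explicit bound $\rho\leq c(m,N,p,s)\,\eta^{-2/(p\zeta_p-2)}$, and hence $\rho(m,\eta)\to 0$ as $\eta\to\infty$, the Sobolev-critical term being automatically subordinate once $\rho$ is that small.

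With this $\rho$ the separation is immediate: for $u\in X$ one has $I_\eta(u)\leq t/2\leq \rho/2$, while for $u\in Y$ one has $I_\eta(u)\geq g(2\rho)\geq \rho-\rho/4=3\rho/4$, so $\sup_X I_\eta\leq \rho/2<3\rho/4\leq \inf_Y I_\eta$; the pointwise positivity $I_\eta(u)>0$ on $X$ (where $t>0$ since $u\in S(m)$ forces $u\ne 0$) follows from $g(t)\geq t/4>0$ for $t\in(0,\rho]$. The only delicate point---and the only place the mass supercritical assumption is used in an essential way---is the verification that $\tfrac{p\zeta_p}{2}>1$ in the exponent step above, since without it the $\eta$-term in $g$ would compete with the linear $t/2$ and no collar could exist for large $\eta$.
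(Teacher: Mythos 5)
Your proposal is correct and takes essentially the same route as the paper: both bound the two nonlinear terms via the fractional Sobolev inequality \eqref{sobolev} and the Gagliardo--Nirenberg inequality \eqref{FNG} in terms of $t=[u]^2_{H^s}$, exploit that the exponents $2_s^*/2$ and $p\zeta_p/2=(Np-2N)/(4s)$ exceed $1$ (the latter being exactly the mass supercritical condition), choose $\rho(m,\eta)$ small against the $\eta$-term to separate $X$ from $Y$, and read off $\rho(m,\eta)\lesssim \eta^{-4s/(Np-2N-4s)}\to 0$. Like the paper's own argument, your proof establishes the pointwise positivity $I_\eta(u)>0$ on $X$ together with the gap $\sup_X I_\eta<\inf_Y I_\eta$, so the two arguments have the same content and strength.
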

\begin{proof}
 In view of \eqref{sobolev} and \eqref{FNG}, we get
\begin{equation}\label{0}
\frac{1}{2_{s}^{*}}\|v\|_{2_{s}^{*}}^{2_{s}^{*}}+\frac{\eta}{p}\|v\|_{p}^{p}\leq \frac{1}{2_{s}^{*}\mathcal{S}^{\frac{2_{s}^{*}}{2}}}([v]^{2}_{H^{s}})^{\frac{2_{s}^{*}}{2}}+\frac{\eta C^{p}(N,p,s)}{p}([v]^{2}_{H^{s}})^{\frac{Np-2N}{4s}}m^{\frac{2sp-Np+2N}{2s}}.
\end{equation}
Then for every $u\in S_{r}(m)$, fixing $[u]^{2}_{H^{s}}\leq\rho(m,\eta)$ and $[v]^{2}_{H^{s}}=2\rho(m,\eta)$, where $\rho(m,\eta)$ is a positive number
which
 depends on $m$ and $\eta$, one has
\begin{align*}
\begin{split}
I_{\eta}(v)-I_{\eta}(u)&\geq \frac{1}{2}[v]^{2}_{H^{s}}-\frac{1}{2}[u]^{2}_{H^{s}}-\frac{1}{2_{s}^{*}}\|v\|_{2_{s}^{*}}^{2_{s}^{*}}-\frac{\eta}{p}\|v\|_{p}^{p}\\
&\geq \frac{1}{2}\rho(m,\eta)-\frac{2^{\frac{2_{s}^{*}}{2}}}{2_{s}^{*}\mathcal{S}^{\frac{2_{s}^{*}}{2}}}(\rho(m,\eta))^{\frac{2_{s}^{*}}{2}}-\frac{\eta C^{p}(N,p,s)}{p}2^{\frac{Np-2N}{4s}}(\rho(m,\eta))^{\frac{Np-2N}{4s}}m^{\frac{2sp-Np+2N}{2s}}.
\end{split}
\end{align*}
Thus, choosing
\begin{align}\label{2}
\rho(m,\eta)=\min\left\{\left(\frac{p}{8\eta C^{p}(N,p,s)2^{\frac{Np-2N}{4s}}m^{\frac{2sp-Np+2N}{2s}}}\right)^{\frac{4s}{Np-2N-4s}}, \left(\frac{2_{s}^{*}}{8}\right)^{\frac{N-2s}{2s}}\left(\frac{ \mathcal{S}}{2}\right)^{\frac{N}{2s}} \right\},
\end{align}
we can infer
that
$$\frac{1}{2}\rho(m,\eta)-\frac{2^{\frac{2_{s}^{*}}{2}}}{2_{s}^{*}\mathcal{S}^{\frac{2_{s}^{*}}{2}}}(\rho(m,\eta))^{\frac{2_{s}^{*}}{2}}-\frac{\eta C^{p}(N,p,s)}{p}2^{\frac{Np-2N}{4s}}(\rho(m,\eta))^{\frac{Np-2N}{4s}}m^{\frac{2sp-Np+2N}{2s}}>0.$$
Now, by \eqref{0} and the definition of $\rho(m,\eta)$ in \eqref{2}, we get
$$I_{\eta}(u)\geq\frac{1}{2}[u]^{2}_{H^{s}}-\frac{2^{\frac{2_{s}^{*}}{2}}}{2_{s}^{*}\mathcal{S}^{\frac{2_{s}^{*}}{2}}}[u]^{2_{s}^{*}}_{H^{s}}-\frac{\eta C^{p}(N,p,s)}{p}2^{\frac{Np-2N}{4s}}m^{\frac{2sp-Np+2N}{2s}}[u]^{\frac{Np-2N}{2s}}_{H^{s}}>0,$$
which means
that the
 inequality in Lemma \ref{Lemma2} holds. Finally, $\lim\limits_{\eta\rightarrow\infty}\rho(m,\eta)=0$ also follows  from \eqref{2}.
 This completes the proof of Lemma \ref{Lemma2}.
\end{proof}

Next,  fix $u_{0}\in S_{r}(m).$
It follows from Lemma \ref{Lemma1} and Lemma \ref{Lemma2} that there exist  numbers $\xi_{1}=\xi_{1}(m,\eta,u_0)<0$
and
 $\xi_2=\xi_2(m,\eta,u_0)>0$ such that the  functions $u_{1,\eta}=\xi_{1}\star u_0, u_{2,\eta}=\xi_{2}\star u_0$ satisfy
$$[u_{1,\eta}]^{2}_{H^{s}}<\frac{\rho(m,\eta)}{2}, [u_{2,\eta}]^{2}_{H^{s}}>2\rho(m,\eta),
\
I_{\eta}(u_{1,\eta})>0,
\
\hbox{and}
\quad
 I_{\eta}(u_{2,\eta})<0.$$

Now, similarly to the discussion in Jeanjean \cite{lj1} or  Luo and Zhang \cite[Proposition 5.3]{HZ}, we fix the minimax
$$E_{\eta}(m)=\inf\limits_{\psi\in \Gamma}\max\limits_{t\in(0,1]}I_{\eta}(\psi(t)),
\
\hbox{where}
\
\Gamma=\{\psi\in C([0,1], S_{r}(m)): [\psi(0)]^{2}_{H^{s}}<\rho(m,\eta)/2, ~ I_{\eta}(\psi(1))<0\}.$$

By virtue of Lemma \ref{Lemma2}, we know that
$$[\psi(1)]^{2}_{H^{s}}>\rho(m,\eta),
\
\hbox{for every}
\
\psi\in\Gamma.$$
Therefore there exists  $t_{0}\in(0,1)$ such that $$[\psi(t_{0})]^{2}_{H^{s}}=\rho(m,\eta)/2 ~~\text{and}~~\max\limits_{t\in[0,1]}I_{\eta}(\psi(t))\geq I_{\eta}(\psi(t_{0}))\geq\inf\limits_{u\in X} I_{\eta}(u)>0,$$
therefore $E_{\eta}(m)>0$.

 The following lemma is a key step to analyze the level value of mountain pass, so we present a more detailed calculation process (in comparison with Li and Zou \cite{lq}).

\begin{lemma}\label{Lemma3}
 $\lim\limits_{\eta\rightarrow\infty}E_{\eta}(m)=0$.
\end{lemma}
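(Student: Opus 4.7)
The plan is to produce, for each $\eta$, a single admissible path $\psi_\eta\in\Gamma$ whose maximum of $I_\eta$ can be bounded explicitly by a quantity going to $0$. Combined with the fact already established that $E_\eta(m)>0$, this will force $E_\eta(m)\to 0$.

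The first step is to fix once and for all a radial test function $u_0\in S_r(m)$ and take as the path the dilation family
$$\psi_\eta(t)=\bigl((1-t)\xi_1^\eta+t\xi_2^\eta\bigr)\star u_0,\qquad t\in[0,1],$$
where $\xi_1^\eta<0$ is chosen so negative that $[\psi_\eta(0)]^2_{H^s}=e^{2s\xi_1^\eta}[u_0]^2_{H^s}<\rho(m,\eta)/2$ (possible because $\rho(m,\eta)>0$), and $\xi_2^\eta>0$ is chosen so large that $I_\eta(\psi_\eta(1))<0$ (possible by Lemma \ref{Lemma1}(2)). Then $\psi_\eta\in\Gamma$, and by definition of $E_\eta(m)$,
$$0<E_\eta(m)\le\max_{t\in[0,1]}I_\eta(\psi_\eta(t))\le\sup_{\xi\in\mathbb{R}}I_\eta(\xi\star u_0).$$

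The key step is to estimate the right-hand supremum via a one-dimensional calculation. Using \eqref{j},
$$I_\eta(\xi\star u_0)=\frac{A}{2}e^{2s\xi}-\frac{B}{2_s^*}e^{\frac{(2_s^*-2)N}{2}\xi}-\frac{\eta C}{p}e^{\frac{(p-2)N}{2}\xi},$$
where $A=[u_0]^2_{H^s}$, $B=\|u_0\|^{2_s^*}_{2_s^*}$, $C=\|u_0\|^p_p$ are fixed positive constants independent of $\eta$. Discarding the nonpositive Sobolev-critical term produces the upper bound
$$I_\eta(\xi\star u_0)\le g_\eta(\xi):=\frac{A}{2}e^{\alpha\xi}-\frac{\eta C}{p}e^{\beta\xi},\qquad \alpha:=2s,\ \beta:=\frac{(p-2)N}{2}.$$
The decisive observation is that the mass-supercritical hypothesis $p>2+4s/N$ is exactly equivalent to $\beta>\alpha>0$. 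Standard calculus then shows that $g_\eta$ attains its maximum at the unique $\xi^*_\eta$ solving $e^{(\beta-\alpha)\xi^*_\eta}=\frac{A\alpha p}{2\eta C\beta}$, and substituting back yields an explicit bound of the form
$$\sup_{\xi\in\mathbb{R}}I_\eta(\xi\star u_0)\le\max_{\xi\in\mathbb{R}}g_\eta(\xi)=K\,\eta^{-\sigma},\qquad \sigma=\frac{\alpha}{\beta-\alpha}=\frac{2s}{(p-2)N/2-2s}>0,$$
for some constant $K=K(u_0,N,p,s)>0$ independent of $\eta$.

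Putting the pieces together, $0<E_\eta(m)\le K\eta^{-\sigma}\to 0$, which proves the lemma. I expect the main obstacle to be purely notational, namely careful bookkeeping of the exponents in the maximization of $g_\eta$ and verifying that the exponent $\sigma$ is strictly positive; there is no compactness issue in play because $u_0$ is fixed throughout and only the coupling constant $\eta$ varies.
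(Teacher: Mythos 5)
Your proposal is correct and follows essentially the same route as the paper: fix $u_0\in S_r(m)$, take the dilation path, drop the Sobolev-critical term, and maximize the resulting two-term exponential (equivalently, power) function in the scaling parameter, which gives exactly the paper's bound $E_\eta(m)\le K\eta^{-\frac{4s}{N(p-2)-4s}}\rightarrow0$ (your $\sigma=\frac{2s}{(p-2)N/2-2s}$ coincides with this exponent). Your explicit tracking of the $\eta$-dependence of $\xi_1^\eta,\xi_2^\eta$ is a minor presentational improvement, not a different argument.
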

\begin{proof}
Fix $u_{0}\in S_{r}(m)$, and consider
 the path $\psi_{0}(t)=[(1-t)\xi_{1}+t\xi_{2}]\star u_{0}\in\Gamma$.
We have
$$E_{\eta}(m)\leq\max\limits_{t\in[0,1]}I_{\eta}(\psi_{0}(t))\leq\max\limits_{r\geq0}\left\{\frac{1}{2}r^{2}[u_{0}]^{2}_{H^{s}}-\frac{\eta}{p}r^{\frac{Np-2N}{2s}}\|u_{0}\|_{p}^{p}\right\}.$$
Thus, setting $C_{1}=[u_{0}]^{2}_{H^{s}}$ and $C_{2}=\|u_{0}\|_{p}^{p}$, we consider the maximum value of the following function
$$f(r)=\frac{1}{2}C_{1}r^{2}-\frac{\eta}{p}C_{2}r^{\frac{Np-2N}{2s}}, ~\text{for~any}~r\geq0.$$
Letting
$$f^{'}(r)=C_{1}r-\left(\frac{Np-2N}{2s}\right)\frac{\eta}{p}C_{2}r^{\frac{Np-2N-2s}{2s}}=0,$$
we get the maximum
of $f(r)$ at
$$r_{max}=\left(\frac{2spC_{1}}{(Np-2N)\eta C_{2}}\right)^{\frac{2s}{Np-2N-4s}}.$$
Hence,
$$
\max\limits_{r\geq0}\left\{\frac{1}{2}r^{2}[u_{0}]^{2}_{H^{s}}-\frac{\eta}{p}r^{\frac{Np-2N}{2s}}\|u_{0}\|_{p}^{p}\right\}
$$
$$
=\frac{1}{2}\left(\frac{2spC_{1}}{(Np-2N)\eta C_{2}}\right)^{\frac{4s}{Np-2N-4s}}C_{1}-\frac{\eta}{p}\left(\frac{2spC_{1}}{(Np-2N)\eta C_{2}}\right)^{\frac{Np-2N}{Np-2N-4s}}C_{2}
$$
$$
\leq\frac{1}{2}\left(\frac{2spC_{1}}{(Np-2N)\eta C_{2}}\right)^{\frac{4s}{Np-2N-4s}}C_{1},
$$
so there exists $C>0$, which is not dependent on $\eta>0$, such that
$$E_{\eta}(m)\leq C\left(\frac{1}{\eta}\right)^{\frac{4s}{Np-2N-4s}}\rightarrow0, ~\text{as}~ \eta\rightarrow\infty, $$
since $p>2+4s/N$. This completes the proof of Lemma \ref{Lemma3}.
\end{proof}

 Similarly to Luo and Zhang \cite[Propositions 5.3-5.4]{HZ}, for $\{\xi_{n}\}\subset \mathbb{R}$ we know that $I_{\eta}(u_{n})$ and $I_{\eta}(\xi_{n}\star u_{n})$ have the same mountain pass level value. Moreover, there is a certain relationship between their Palais-Smale sequences.

 \begin{lemma}\label{Lemma4}
Let $\{\xi_{n}\star u_{n}\}\subset S_{r}(m)$ be
  a Palais-Smale sequence for $I_{\eta}$ at  level $E_{\eta}(m),$ i.e.,
 $$I_{\eta}(\xi_{n}\star u_{n})\rightarrow E_{\eta}(m)>0
 \
 \hbox{and}
 \
I^{'}_{\eta}(\xi_{n}\star u_{n})\rightarrow 0,~\text{as} ~n\rightarrow\infty. ~Then ~\lim\limits_{n\rightarrow\infty}P_{\eta}(\xi_{n}\star u_{n})=0.$$
\end{lemma}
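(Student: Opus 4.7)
The plan is to deploy the augmented-functional approach of Jeanjean \cite{lj1}, which exploits the scaling $(\xi,u)\mapsto \xi\star u$ in an essential way. First, I would introduce the auxiliary functional
\[
\widetilde{I}_{\eta}(u,\xi) := I_{\eta}(\xi\star u) = \frac{1}{2} e^{2s\xi}[u]^{2}_{H^{s}} - \frac{1}{2^{*}_{s}} e^{\frac{(2^{*}_{s}-2)N\xi}{2}}\|u\|_{2^{*}_{s}}^{2^{*}_{s}} - \frac{\eta}{p} e^{\frac{(p-2)N\xi}{2}}\|u\|_{p}^{p},
\]
which is $C^{1}$ on $H^{s}_{rad}(\mathbb{R}^{N})\times\mathbb{R}$ by \eqref{j}. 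A direct differentiation in $\xi$, combined with the elementary identities $\frac{(2^{*}_{s}-2)N}{2\cdot 2^{*}_{s}} = s$ and $\frac{(p-2)N}{2p} = s\zeta_{p}$ (the latter being literally the definition \eqref{add1}), yields the key identity
\[
\partial_{\xi}\widetilde{I}_{\eta}(u,\xi) = s\left([\xi\star u]^{2}_{H^{s}} - \|\xi\star u\|_{2^{*}_{s}}^{2^{*}_{s}} - \eta\zeta_{p}\|\xi\star u\|_{p}^{p}\right) = s\cdot P_{\eta}(\xi\star u).
\]

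Next, I would verify that the minimax value $E_{\eta}(m)$ coincides with the analogous minimax $\widetilde{E}_{\eta}(m)$ for $\widetilde{I}_{\eta}$ on the product manifold $S_{r}(m)\times\mathbb{R}$: to any admissible $\psi\in\Gamma$ one associates the lifted path $\widetilde{\psi}(t) = (\psi(t),0)$ with $\widetilde{I}_{\eta}(\widetilde{\psi}(t)) = I_{\eta}(\psi(t))$, while every product path $(u(t),\xi(t))$ projects to $t\mapsto \xi(t)\star u(t)\in S_{r}(m)$ with identical functional values, and the endpoint conditions defining $\Gamma$ are preserved in both directions. Hence the two minimax values agree.

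The central step is then to apply Ghoussoub's minimax principle to $\widetilde{I}_{\eta}|_{S_{r}(m)\times\mathbb{R}}$, producing a Palais-Smale sequence $\{(u_{n},\xi_{n})\}$ at level $E_{\eta}(m)$ for which, in addition to the tangential condition along $S_{r}(m)$, one has $\partial_{\xi}\widetilde{I}_{\eta}(u_{n},\xi_{n})\to 0$. Using the chain rule together with the $L^{2}$-isometry $\|\xi\star u\|_{2} = \|u\|_{2}$, one checks that $w_{n}:=\xi_{n}\star u_{n}$ is a Palais-Smale sequence for $I_{\eta}|_{S_{r}(m)}$ at level $E_{\eta}(m)$, which is precisely the sequence $\xi_{n}\star u_{n}$ posited in the hypothesis. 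Plugging into the identity above then gives
\[
P_{\eta}(\xi_{n}\star u_{n}) = s^{-1}\partial_{\xi}\widetilde{I}_{\eta}(u_{n},\xi_{n})\to 0,
\]
which is the desired conclusion.

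The main obstacle is the careful deployment of Ghoussoub's principle on the augmented product manifold, together with the identification of the PS sequence it produces with the one appearing in the statement. This step amounts to a deformation lemma adapted to the product setting and to verifying that the $\xi$-slot of the pseudo-gradient flow does not destroy the minimax value; it is available from the framework of \cite{lj1} and is used in the same spirit in \cite[Propositions 5.3--5.4]{HZ}. Once this identification is secured, the vanishing of $P_{\eta}$ follows at once from the identity $\partial_{\xi}\widetilde{I}_{\eta}(u,\xi) = s\cdot P_{\eta}(\xi\star u)$.
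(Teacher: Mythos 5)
Your key identity $\partial_{\xi}\widetilde{I}_{\eta}(u,\xi)=s\,P_{\eta}(\xi\star u)$ is exactly the computation at the heart of the paper's proof (the paper even has a harmless typo there, writing a factor $2$ on the first term; your exponent arithmetic is the correct one). But after that point you and the paper diverge, and your route has a genuine gap. The paper argues directly on the \emph{given} sequence: it observes that $\xi\mapsto I_{\eta}(\xi\star u_{n})$ is $C^{1}$, that its $\xi$-derivative at $\xi_{n}$ equals $sP_{\eta}(\xi_{n}\star u_{n})$, and that this derivative is a directional derivative of $I_{\eta}$ at $\xi_{n}\star u_{n}$ along the scaling direction (which is tangent to $S(m)$ because $\|\xi\star u\|_{2}=\|u\|_{2}$), so the hypothesis $I'_{\eta}(\xi_{n}\star u_{n})\to 0$ forces $P_{\eta}(\xi_{n}\star u_{n})\to 0$; no minimax construction is invoked at all.

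Your proposal instead runs the full Jeanjean--Ghoussoub machinery on $S_{r}(m)\times\mathbb{R}$ and obtains \emph{some} Palais--Smale sequence $\{(u_{n},\xi_{n})\}$ for $\widetilde{I}_{\eta}$ at level $E_{\eta}(m)$ with the extra property $\partial_{\xi}\widetilde{I}_{\eta}(u_{n},\xi_{n})\to 0$. The sentence ``which is precisely the sequence $\xi_{n}\star u_{n}$ posited in the hypothesis'' is the missing step: the lemma is a conditional statement about an \emph{arbitrary} Palais--Smale sequence of the form $\xi_{n}\star u_{n}$ handed to you, and nothing in the hypothesis says it arises from the minimax construction for the augmented functional. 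Ghoussoub's principle gives existence of a special sequence; it gives no information about the given one, and it is not known a priori (indeed it is the whole point of the lemma) that every Palais--Smale sequence at level $E_{\eta}(m)$ satisfies $P_{\eta}\to 0$. So what you have proved is the existence statement ``there is a Palais--Smale sequence at level $E_{\eta}(m)$ along which $P_{\eta}\to 0$'' --- which is indeed what is ultimately used in Section 4 via the analogues of \cite[Propositions 5.3--5.4]{HZ} --- but not the implication asserted in Lemma \ref{Lemma4}. To close the gap within your framework you would either have to read the hypothesis as saying that $\{(u_{n},\xi_{n})\}$ is a Palais--Smale sequence for $\widetilde{I}_{\eta}$ (then your identity finishes it at once), or argue as the paper does, differentiating $\xi\mapsto I_{\eta}(\xi\star u_{n})$ at the given $\xi_{n}$ and using that this is a tangential directional derivative of $I_{\eta}$ at $\xi_{n}\star u_{n}$ together with $I'_{\eta}(\xi_{n}\star u_{n})\to 0$.
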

\begin{proof}

We first have
\begin{align*}
\begin{split}
I_{\eta}(\xi_{n}\star u_{n})&=\frac{1}{2}[\xi_{n}\star u_{n}]^{2}_{H^{s}}-\frac{1}{2_{s}^{*}}\|\xi_{n}\star u_{n}\|_{2_{s}^{*}}^{2_{s}^{*}}-\frac{\eta}{p}\|\xi_{n}\star u_{n}\|_{p}^{p}\\&= \frac{1}{2}e^{2\xi_{n} s}[u_{n}]^{2}_{H^{s}}-\frac{1}{2_{s}^{*}}e^{\frac{(2_{s}^{*}-2)N\xi_{n}}{2}}\|u_{n}\|_{2_{s}^{*}}^{2_{s}^{*}}-\frac{\eta}{p}e^{\frac{(p-2)N\xi_{n}}{2}}\|u_{n}\|_{p}^{p},
\end{split}
\end{align*}
and  $I_{\eta}(\xi_{n}\star u_{n})$ is $C^{1}$ with respect to $\xi_{n}$. Now, by taking the derivative
\begin{align*}
\frac{\partial}{\partial \xi_{n}}I_{\eta}(\xi_{n}\star u_{n})= 2se^{2\xi_{n} s}[u_{n}]^{2}_{H^{s}}-se^{\frac{(2_{s}^{*}-2)N\xi_{n}}{2}}\|u_{n}\|_{2_{s}^{*}}^{2_{s}^{*}}-s\eta\zeta_{p}e^{\frac{(p-2)N\xi_{n}}{2}}\|u_{n}\|_{p}^{p},
\end{align*}
and noticing  that
$$P_{\eta}(\xi_{n}\star u_{n})= 2e^{2\xi_{n} s}[u_{n}]^{2}_{H^{s}}-e^{\frac{(2_{s}^{*}-2)N\xi_{n}}{2}}\|u_{n}\|_{2_{s}^{*}}^{2_{s}^{*}}-\eta\zeta_{p}e^{\frac{(p-2)N\xi_{n}}{2}}\|u_{n}\|_{p}^{p},$$
we can infer that
$$\frac{\partial}{\partial\xi_{n}}I_{\eta}(\xi_{n}\star u_{n})=sP_{\eta}(\xi_{n}\star u_{n}).$$
Thus, the conclusion of Lemma \ref{Lemma4} is a consequence of the following limit
$$\lim\limits_{n\rightarrow\infty}\frac{\partial}{\partial\xi_{n}}I_{\eta}(\xi_{n}\star u_{n})=0,$$
since $I^{'}_{\eta}(\xi_{n}\star u_{n})\rightarrow 0,~\text{as} ~n\rightarrow\infty.$ This completes the proof of Lemma \ref{Lemma4}.

\end{proof}
 \begin{lemma}\label{Lemma5}
Let $\{u_{n}\}\subset S_{r}(m)$ be a Palais-Smale sequence for  $I_{\eta}$ with the level $E_{\eta}(m)$. If $\lim\limits_{n\rightarrow\infty}P_{\eta}(u_{n})=0,$
then  $\{u_{n}\}$ is bounded in $S_{r}(m)$.
\end{lemma}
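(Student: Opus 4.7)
The plan is to combine the energy functional $I_\eta(u_n)$ with the Pohozaev functional $P_\eta(u_n)$ in such a way that both the critical and the $L^p$ terms have coefficients of the \emph{same} (favorable) sign, leaving only the Gagliardo seminorm on the left-hand side. Since $u_n\in S_r(m)$ already controls $\|u_n\|_2=m$, boundedness of $[u_n]^2_{H^s}$ is the only missing ingredient for boundedness in $H^s(\mathbb{R}^N)$.

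Concretely, I would first note that the assumption $p>2+\frac{4s}{N}$ together with $\zeta_p=(Np-2N)/(2ps)$ gives $p\zeta_p=N(p-2)/(2s)>2$, while $p<2^{*}_{s}$ yields $p\zeta_p<2^{*}_{s}\zeta_{2^{*}_{s}}=2^{*}_{s}$. Hence both
\[
\alpha:=\tfrac{1}{2}-\tfrac{1}{p\zeta_p}>0,\qquad \beta:=\tfrac{1}{p\zeta_p}-\tfrac{1}{2^{*}_{s}}>0.
\]
Next, using the explicit formulas for $I_\eta$ and $P_\eta$, I would compute
\[
I_{\eta}(u_n)-\tfrac{1}{p\zeta_p}P_{\eta}(u_n)=\alpha\,[u_n]^{2}_{H^{s}}+\beta\,\|u_n\|^{2^{*}_{s}}_{2^{*}_{s}},
\]
because the choice $b=a/(p\zeta_p)$ annihilates the $\|u_n\|^p_p$ contribution, and the coefficient in front of $\|u_n\|^{2^{*}_{s}}_{2^{*}_{s}}$ becomes $-(\tfrac{1}{2^{*}_{s}}-\tfrac{1}{p\zeta_p})=\beta>0$.

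Finally, since the Palais--Smale hypothesis gives $I_\eta(u_n)\to E_\eta(m)$ and the extra assumption gives $P_\eta(u_n)\to 0$, the left-hand side is bounded. As both $\alpha$ and $\beta$ are strictly positive and both $[u_n]^{2}_{H^{s}}$ and $\|u_n\|^{2^{*}_{s}}_{2^{*}_{s}}$ are non-negative, I conclude $[u_n]^{2}_{H^{s}}\le \alpha^{-1}\bigl(I_\eta(u_n)-(p\zeta_p)^{-1}P_\eta(u_n)\bigr)=O(1)$. Combined with $\|u_n\|_2=m$, this yields boundedness of $\{u_n\}$ in $H^s(\mathbb{R}^N)$ and hence in $S_r(m)$. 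The only subtle point is verifying the two sign conditions on $\alpha$ and $\beta$, which is really the whole reason why the mass-supercritical/Sobolev-subcritical window $2+4s/N<p<2^{*}_{s}$ appears; there is no serious obstacle beyond that arithmetic check.
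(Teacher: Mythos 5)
Your proof is correct and follows essentially the same route as the paper: both arguments take a linear combination of $I_{\eta}(u_{n})$ and $P_{\eta}(u_{n})$ and exploit the inequalities $2<p\zeta_{p}<2^{*}_{s}$ coming from $2+\frac{4s}{N}<p<2^{*}_{s}$. The only difference is the coefficient: the paper subtracts $\frac{1}{2}P_{\eta}(u_{n})$, which eliminates $[u_{n}]^{2}_{H^{s}}$ and first bounds $\|u_{n}\|_{p}^{p}$ and $\|u_{n}\|_{2^{*}_{s}}^{2^{*}_{s}}$ before recovering the seminorm bound from $P_{\eta}(u_{n})\rightarrow 0$, whereas your choice $\frac{1}{p\zeta_{p}}$ eliminates the $L^{p}$ term and bounds $[u_{n}]^{2}_{H^{s}}$ directly.
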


\begin{proof}
We note that $\zeta_{p}p>2,$ since $p>2+4s/N$. It
follows
from
 $\lim\limits_{n\rightarrow\infty}P_{\eta}(u_{n})=0$
 that
$$I_{\eta}(u_{n})=\frac{\eta}{2p}(\zeta_{p}p-2)\|u_{n}\|_{p}^{p}+\frac{s}{N}\|u_{n}\|_{2_{s}^{*}}^{2_{s}^{*}}+o(1),$$
and from the boundedness of $I_{\eta}(u_{n}),$ that $\{\|u_{n}\|_{p}^{p}\}$ and $\{\|u_{n}\|_{2_{s}^{*}}^{2_{s}^{*}}\}$ are both bounded, therefore $\{[u_{n}]^{2}_{H^{s}}\}$ is bounded.
 This completes the proof of Lemma \ref{Lemma5}.
\end{proof}

\section{Compactness condition}\label{sec3}
In this section we give a very important proof of the compactness conditions,  inspired by the ideas of Soave \cite{NS}.
\begin{proposition}\label{pro}
Let $\{u_{n}\}\subset S_{r}(m)$ be a Palais-Smale sequence for  $I_{\eta}$ with the level $$0<E_{\eta}(m)<\frac{s\mathcal{S}^{\frac{N}{2s}}}{N},$$ where $\mathcal{S}$ is the
 best fractional Sobolev constant defined
 in
  \eqref{sobolev}. If $\lim\limits_{n\rightarrow\infty}P_{\eta}(u_{n})=0,$ then one of the following properties holds:

$(1)$ either up to subsequence, $u_{n}\rightharpoonup \widetilde{u}$ converges weakly in $H^{s}(\mathbb{R}^{N})$ but not strongly, where $\widetilde{u}\not\equiv0$
is a solution of the first equation  of \eqref{p} for some $\mu<0,$
and
 $$I_{\eta}(\widetilde{u})<E_{\eta}(m)-\frac{s\mathcal{S}^{\frac{N}{2s}}}{N};$$

$(2)$ or up to subsequence, $u_{n}\rightarrow \widetilde{u}$ converges strongly in $H^{s}(\mathbb{R}^{N})$, $I_{\eta}(\widetilde{u})=E_{\eta}(m)$, and $\widetilde{u}$ is a solution of \eqref{p} for some $\mu<0$.

\end{proposition}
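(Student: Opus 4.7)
The plan is to follow the Soave/Br\'ezis-Nirenberg dichotomy: extract a weak limit, perform a Br\'ezis-Lieb style splitting driven by the Pohozaev condition, and use the sharp Sobolev constant $\mathcal{S}$ as a quantitative threshold. First, by Lemma~\ref{Lemma5} the sequence $\{u_{n}\}$ is bounded in $H^{s}(\mathbb{R}^{N})$, so up to a subsequence $u_{n}\rightharpoonup \widetilde{u}$ in $H^{s}$, $u_{n}\to\widetilde{u}$ a.e., and by the compact embedding $H^{s}_{rad}(\mathbb{R}^{N})\hookrightarrow L^{p}(\mathbb{R}^{N})$ for $p\in(2,2^{*}_{s})$ I obtain $u_{n}\to\widetilde{u}$ in $L^{p}$. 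Since $\{u_{n}\}$ is a PS sequence constrained to $S(m)$, there exist Lagrange multipliers $\mu_{n}\in\mathbb{R}$ with $I'_{\eta}(u_{n})-\mu_{n}u_{n}\to 0$ in $H^{-s}$; testing against $u_{n}$ and combining with $P_{\eta}(u_{n})\to 0$ gives $\mu_{n}m^{2}=-\eta(1-\zeta_{p})\|u_{n}\|_{p}^{p}+o(1)$, so $\mu_{n}$ is bounded and $\mu_{n}\to \mu\le 0$. Passing to the weak limit shows that $\widetilde{u}$ solves $(-\Delta)^{s}\widetilde{u}=\mu\widetilde{u}+|\widetilde{u}|^{2^{*}_{s}-2}\widetilde{u}+\eta|\widetilde{u}|^{p-2}\widetilde{u}$, and comparing this identity (tested against $\widetilde{u}$) with its Pohozaev identity forces $\mu<0$ whenever $\widetilde{u}\not\equiv 0$.

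Next I set $v_{n}=u_{n}-\widetilde{u}$ and apply Br\'ezis-Lieb in its fractional form to split the Gagliardo seminorm and the critical Lebesgue norm, namely $[u_{n}]_{H^{s}}^{2}=[\widetilde{u}]_{H^{s}}^{2}+[v_{n}]_{H^{s}}^{2}+o(1)$ and $\|u_{n}\|_{2^{*}_{s}}^{2^{*}_{s}}=\|\widetilde{u}\|_{2^{*}_{s}}^{2^{*}_{s}}+\|v_{n}\|_{2^{*}_{s}}^{2^{*}_{s}}+o(1)$, while the subcritical term converges directly by the strong $L^{p}$ convergence. The Pohozaev identity $[\widetilde{u}]_{H^{s}}^{2}=\|\widetilde{u}\|_{2^{*}_{s}}^{2^{*}_{s}}+\eta\zeta_{p}\|\widetilde{u}\|_{p}^{p}$ for $\widetilde{u}$, subtracted from $P_{\eta}(u_{n})=o(1)$, eliminates the $L^{p}$ contribution and yields $[v_{n}]_{H^{s}}^{2}-\|v_{n}\|_{2^{*}_{s}}^{2^{*}_{s}}\to 0$. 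Denoting $A=\lim[v_{n}]_{H^{s}}^{2}$, the fractional Sobolev inequality $\mathcal{S}\|v_{n}\|_{2^{*}_{s}}^{2}\le [v_{n}]_{H^{s}}^{2}$ produces the decisive dichotomy: either $A=0$, or $A\ge \mathcal{S}^{N/(2s)}$.

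Finally, I insert the decomposition into the energy to obtain $I_{\eta}(u_{n})=I_{\eta}(\widetilde{u})+\frac{s}{N}A+o(1)$, which separates the two alternatives of the proposition. If $A=0$, then $u_{n}\to\widetilde{u}$ strongly in the homogeneous space $\dot H^{s}$, and combined with the $L^{p}$ convergence and $\|u_{n}\|_{2}=m$ this upgrades to strong convergence in $H^{s}$, so $\widetilde{u}\in S(m)$ is a bona fide solution of \eqref{p} at level $E_{\eta}(m)$ with $\mu<0$, giving case $(2)$. If instead $A\ge \mathcal{S}^{N/(2s)}$, then $I_{\eta}(\widetilde{u})\le E_{\eta}(m)-\frac{s\mathcal{S}^{N/(2s)}}{N}$, which is the content of case $(1)$; moreover, $\widetilde{u}\not\equiv 0$ since otherwise $I_{\eta}(u_{n})\to \frac{s}{N}A\ge \frac{s\mathcal{S}^{N/(2s)}}{N}$, contradicting the standing hypothesis $E_{\eta}(m)<\frac{s\mathcal{S}^{N/(2s)}}{N}$, and $\mu<0$ then follows from the first paragraph. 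The main technical obstacle is to set up the two Br\'ezis-Lieb splittings (for the nonlocal Gagliardo seminorm and the critical Lebesgue norm) simultaneously with the correct passage to the limit for $\mu_{n}$, so that both the weak-limit equation and its Pohozaev identity are available and compatible with $P_{\eta}(u_{n})\to 0$; once these ingredients are aligned, the Sobolev-threshold dichotomy makes the two alternatives essentially automatic.
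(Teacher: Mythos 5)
Your proposal follows the same Soave-type route as the paper (boundedness via Lemma~\ref{Lemma5}, the radial compact embedding, Lagrange multipliers, the Br\'ezis--Lieb splitting of $[\,\cdot\,]_{H^{s}}^{2}$ and $\|\cdot\|_{2_{s}^{*}}^{2_{s}^{*}}$, the Pohozaev identity $P_{\eta}(\widetilde{u})=0$, and the Sobolev-threshold dichotomy), but the case $A=0$ contains a genuine gap. Knowing $[u_{n}-\widetilde{u}]_{H^{s}}^{2}\to 0$, $u_{n}\to\widetilde{u}$ in $L^{p}$, and $\|u_{n}\|_{2}=m$ does \emph{not} ``upgrade'' to strong convergence in $H^{s}$: the missing piece is strong convergence in $L^{2}(\mathbb{R}^{N})$, which cannot be obtained by interpolation (the exponent $2$ is the lower endpoint) nor from the constraint, since weak convergence only gives $\|\widetilde{u}\|_{2}\leq m$ and $L^{2}$-mass may escape to infinity even for radial functions (e.g.\ $u_{n}(x)=n^{-N/2}\varphi(x/n)$ has constant $L^{2}$-norm but tends to $0$ in $\dot{H}^{s}\cap L^{p}$). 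In other words, in alternative (2) you assert exactly what has to be proved, namely $\widetilde{u}\in S(m)$. The paper closes this gap through the sign of the multiplier: one tests \eqref{3.1} with $\phi=u_{n}-\widetilde{u}$, subtracts the limit equation tested with the same function, uses $[u_{n}-\widetilde{u}]_{H^{s}}\to 0$, $\|u_{n}-\widetilde{u}\|_{2_{s}^{*}}\to 0$ and the strong $L^{p}$ convergence, and concludes $\mu\lim_{n}\|u_{n}-\widetilde{u}\|_{2}^{2}=0$; since $\mu<0$, this forces $u_{n}\to\widetilde{u}$ in $L^{2}$ and hence in $H^{s}$.

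A related smaller point: you establish $\widetilde{u}\not\equiv 0$ (and hence $\mu<0$) only in the case $A\geq\mathcal{S}^{N/(2s)}$, but in the case $A=0$ the nontriviality is equally essential, because $\mu<0$ is precisely what drives the recovery of $L^{2}$-compactness above. It follows from your own energy identity: if $\widetilde{u}\equiv 0$ and $A=0$, then $I_{\eta}(u_{n})\to 0$, contradicting $E_{\eta}(m)>0$ (the paper runs this two-case nontriviality argument before the splitting). Apart from these points, your splitting, the identity $I_{\eta}(u_{n})=I_{\eta}(\widetilde{u})+\frac{s}{N}A+o(1)$, and the treatment of the alternative $A\geq\mathcal{S}^{N/(2s)}$ agree with the paper's proof.
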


\begin{proof}
In general, the embedding $H^{s}(\mathbb{R}^{N})\hookrightarrow L^{p}(\mathbb{R}^{N})$ is not compact for any $p\in(2,2_{s}^{*})$, so we need to restore compactness in the radial function space. According to Lemma \ref{Lemma5}, we know that sequence $\{u_{u}\}$ is  bounded and  the embedding $H^{s}_{rad}(\mathbb{R}^{N})\hookrightarrow L^{p}(\mathbb{R}^{N})$ is compact for every $p\in(2,2_{s}^{*})$ (see Lions \cite[Proposition I.1]{pll}). Therefore there exists $\widetilde{u}\in H^{s}_{rad}(\mathbb{R}^{N})$ such that up to a subsequence, $u_{n}\rightharpoonup \widetilde{u}$ converges
 weakly in $H^{s}(\mathbb{R}^{N})$, $u_{n}\rightarrow \widetilde{u}$ converges strongly in $L^{p}(\mathbb{R}^{N})$, and a.e. in $\mathbb{R}^{N}$. Since $\{u_{n}\}$ is a Palais-Smale sequence for  $I_{\eta}\mid_{S(m)},$ by the Lagrange multipliers rule there exists $\{\mu_{n}\}\subset \mathbb{R}$ such that for every $\phi\in H^{s}(\mathbb{R}^{N}),$
\begin{align}\label{3.1}
\int\int_{\mathbb{R}^{2N}}
\frac{(u_{n}(x)-u_{n}(y))(\phi(x)-\phi(y))}{|x-y|^{N+2s}}dxdy-\int_{\mathbb{R}^{N}}\Big(\mu_{n}u_{n}\phi+|u_{n}|^{2^{*}_{s}-2}u_{n}\phi+\eta|u_{n}|^{p-2}u_{n}\phi \Big)dx
=o(1)\|\phi\|,
\end{align}
as $n\rightarrow\infty$. Setting $\phi=u_{n}$, we
can
 infer that $\{\mu_{n}\}$ is also bounded, and therefore up to a subsequence, $\mu_{n}\rightarrow\mu\in \mathbb{R}^{N}$. By invoking
 $\lim\limits_{n\rightarrow\infty}P_{\eta}(u_{n})=0,$ the
 compactness of the embedding $H^{s}_{rad}(\mathbb{R}^{N})\hookrightarrow L^{p}(\mathbb{R}^{N}),$ and $\zeta_{p}<1$, we
 get
\begin{align}\label{3.2}
\mu m^{2}=&\lim\limits_{n\rightarrow\infty} \mu_{n}\|u_{n}\|_{2}^{2}=\lim\limits_{n\rightarrow\infty}\left( [u_{n}]^{2}_{H^{s}}-\|u_{n}\|_{2_{s}^{*}}^{2_{s}^{*}}-\eta\|u_{n}\|_{p}^{p}\right)\nonumber\\
=&\lim\limits_{n\rightarrow\infty}\eta(\zeta_{p}-1)\|u_{n}\|_{p}^{p}=\eta(\zeta_{p}-1)\|\widetilde{u}\|_{p}^{p}\leq0,
\end{align}
where
 $\mu=0$ if and only if $\widetilde{u}\equiv0$.

We shall now show that
\begin{align}\label{claim}
  \widetilde{u}\not\equiv0.
\end{align}
Suppose to the contrary, that $\widetilde{u}\equiv0$. Since $\{u_{n}\}$ is bounded in $H^{s}(\mathbb{R}^{N}),$
it follows that up to a subsequence,
$[u_{n}]^{2}_{H^{s}}\rightarrow\gamma\in \mathbb{R}$.
Since $P_{\eta}(u_{n})\rightarrow0$ and $u_{n}\rightarrow0$ converges strongly in $L^{p}(\mathbb{R}^{N})$, it follows that $$\|u_{n}\|_{2_{s}^{*}}^{2_{s}^{*}}=[u_{n}]^{2}_{H^{s}}-\eta\zeta_{p}\|u_{n}\|_{p}^{p}\rightarrow\gamma,$$ therefore by \eqref{sobolev},
 $\gamma\geq\mathcal{S}\gamma^{\frac{2}{2_{s}^{*}}}.$
Furthermore, we can infer that
$$~\text{either}~ \gamma=0 ~\text{or}~ \gamma>\mathcal{S}^{\frac{N}{2s}}.$$
If $\gamma>\mathcal{S}^{\frac{N}{2s}},$
then due to $I_{\eta}(u_{n})\rightarrow E_{\eta}(m)$ and  $\lim\limits_{n\rightarrow\infty}P_{\eta}(u_{n})=0,$ we get
\begin{align*}
E_{\eta}(m)+o(1)=&I_{\eta}(u_{n})=\frac{s}{N}[u_{n}]^{2}_{H^{s}}-\frac{\eta}{p}\left(1-\frac{\zeta_{p}p}{2_{s}^{*}}\right)\|u_{n}\|_{p}^{p}+o(1)\\
=&\frac{s}{N}[u_{n}]^{2}_{H^{s}}+o(1)=\frac{\gamma s}{N}+o(1),
\end{align*}
so $E_{\eta}(m)=\frac{\gamma s}{N}$, thereby  $E_{\eta}(m)\geq\frac{s\mathcal{S}^{\frac{N}{2s}}}{N}$,  which contradicts our conditions.

If instead, we have $\gamma=0$, we note that $[u_{n}]^{2}_{H^{s}}\rightarrow0$, $\|u_{n}\|_{2_{s}^{*}}^{2_{s}^{*}}\rightarrow0$ and $\|u_{n}\|_{p}^{p}\rightarrow0$. Therefore $I_{\eta}(u_{n})\rightarrow0$, which is a contradiction as well. So \eqref{claim} is proved.
Furthermore, it follows from \eqref{3.2} and \eqref{claim} that $\mu<0$. Invoking
 the limit weak convergence in \eqref{3.1}, we get
\begin{align}\label{3.4}
(-\Delta)^{s} \widetilde{u}=\mu \widetilde{u}+|\widetilde{u}|^{2^{*}_{s}-2}\widetilde{u}+\eta|\widetilde{u}|^{p-2}\widetilde{u}~~ ~\text{in}~\mathbb{R}^{N},
\end{align}
and thus by the Pohozaev identity (see Chang and Wang \cite[Proposition 4.1]{XC}) and related explanations in Zhen and Zhang \cite[Proposition 2.1 and Remark 2.1]{MZ}, we have $P_{\eta}(\widetilde{u})=0$. We know that $w_{n}=u_{n}-\widetilde{u}\rightharpoonup0$ in $H^{s}(\mathbb{R}^{N})$, and  according to Zuo et al.
\cite[Lemma 2.4]{zuo} and  the Br\'{e}zis-Lieb lemma \cite{bl}, we have

\begin{align}\label{3.5}
\begin{split}
[u_{n}]^{2}_{H^{s}}&=[\widetilde{u}]^{2}_{H^{s}}+[w_{n}]^{2}_{H^{s}}+o(1),\\ ~ \|u_{n}\|_{2_{s}^{*}}^{2_{s}^{*}}&=\|\widetilde{u}\|_{2_{s}^{*}}^{2_{s}^{*}}+\|w_{n}\|_{2_{s}^{*}}^{2_{s}^{*}}+o(1).
\end{split}
\end{align}

Thus, by  $\lim\limits_{n\rightarrow\infty}P_{\eta}(u_{n})=0$ and since $u_{n}\rightarrow \widetilde{u}$
converges
strongly in $L^{p}$, we obtain $$[\widetilde{u}]^{2}_{H^{s}}+[w_{n}]^{2}_{H^{s}}=\eta\zeta_{p}\|\widetilde{u}\|_{p}^{p}+\|\widetilde{u}\|_{2_{s}^{*}}^{2_{s}^{*}}+\|w_{n}\|_{2_{s}^{*}}^{2_{s}^{*}}+o(1).$$
In view of $P_{\eta}(\widetilde{u})=0$, we also have that $$[w_{n}]^{2}_{H^{s}}=\|w_{n}\|_{2_{s}^{*}}^{2_{s}^{*}}+o(1).$$ We claim that up to a subsequence
$$\lim\limits_{n\rightarrow\infty}[w_{n}]^{2}_{H^{s}}=\lim\limits_{n\rightarrow\infty}\|w_{n}\|_{2_{s}^{*}}^{2_{s}^{*}}=\gamma\geq0,~~ \Rightarrow~~ \gamma\geq\mathcal{S}\gamma^{\frac{2}{2_{s}^{*}}}$$
thanks to \eqref{sobolev}. Hence, either $\gamma=0$ or $\gamma>\mathcal{S}^{\frac{N}{2s}}$.

If $\gamma>\mathcal{S}^{\frac{N}{2s}}$, then from \eqref{3.5}, we obtain that
$$E_{\eta}(m)=\lim\limits_{n\rightarrow\infty}I_{\eta}(u_{n})=\lim\limits_{n\rightarrow\infty}\left(I_{\eta}(\widetilde{u})+\frac{1}{2}[w_{n}]^{2}_{H^{s}}
-\frac{1}{2_{s}^{*}}\|w_{n}\|_{2_{s}^{*}}^{2_{s}^{*}}\right)=I_{\eta}(\widetilde{u})+\frac{s\gamma}{N}\geq I_{\eta}(\widetilde{u})+\frac{s\mathcal{S}^{\frac{N}{2s}}}{N},$$
whence alternative (1) in the assertion of the proposition follows, i.e., up to a subsequence $u_{n}\rightharpoonup \widetilde{u}$ converges weakly in $H^{s}(\mathbb{R}^{N})$ but not strongly, where $\widetilde{u}\not\equiv0$
is a solution of the first equation  of \eqref{p} for some $\mu<0,$
and
 $$I_{\eta}(\widetilde{u})<E_{\eta}(m)-\frac{s\mathcal{S}^{\frac{N}{2s}}}{N}.$$

If instead, we have $\gamma=0$, then we claim that $u_{n}\rightarrow \widetilde{u}$ in $H^{s}(\mathbb{R}^{N})$. Indeed, we have $\lim\limits_{n\rightarrow\infty}[w_{n}]^{2}_{H^{s}}=0$, so it follows from  $w_{n}=u_{n}-\widetilde{u}$ that $[u_{n}-\widetilde{u}]^{2}_{H^{s}}\rightarrow0$.

Next, it suffices to verify that $u_{n}\rightarrow \widetilde{u}$ in $L^{2}$. Choosing $\phi=u_{n}-\widetilde{u}$ in \eqref{3.1}, invoking \eqref{3.4} with $u_{n}\rightarrow \widetilde{u}$ and subtracting, we get
\begin{align*}
[u_{n}-&\widetilde{u}]^{2}_{H^{s}}-\int_{\mathbb{R}^{N}}\left(\mu_{n}u_{n}-\mu\widetilde{u}\right)\left(u_{n}-\widetilde{u}\right)dx=\\
&\int_{\mathbb{R}^{N}}\left(|u_{n}|^{2_{s}^{*}-2}u_{n}-|\widetilde{u}|^{2_{s}^{*}-2}\widetilde{u}\right)\left(u_{n}-\widetilde{u}\right)dx+\int_{\mathbb{R}^{N}}\left(|u_{n}|^{p-2}u_{n}-|\widetilde{u}|^{p-2}\widetilde{u}\right)\left(u_{n}-\widetilde{u}\right)dx+o(1).
\end{align*}
 We note that $\lim\limits_{n\rightarrow\infty}\|w_{n}\|_{2_{s}^{*}}^{2_{s}^{*}}=0.$
 It follows from \eqref{3.5} that $\|u_{n}\|_{2_{s}^{*}}^{2_{s}^{*}}\rightarrow\|\widetilde{u}\|_{2_{s}^{*}}^{2_{s}^{*}}$, therefore, in the formula above, the first term, the third term, and the fourth term converge to 0. As a result,  $$0=\lim\limits_{n\rightarrow\infty}\int_{\mathbb{R}^{N}}\left(\mu_{n}u_{n}-\mu\widetilde{u}\right)\left(u_{n}-\widetilde{u}\right)dx=\lim\limits_{n\rightarrow\infty}\mu\int_{\mathbb{R}^{N}}\left(u_{n}-\widetilde{u}\right)^{2}dx.$$
Thus also assertion (2) of Proposition \ref{pro} has been established, i.e., up to subsequence, $u_{n}\rightarrow \widetilde{u}$ converges strongly in $H^{s}(\mathbb{R}^{N})$, $I_{\eta}(\widetilde{u})=E_{\eta}(m)$, and $\widetilde{u}$ is a solution of \eqref{p} for some $\mu<0$. The proof of   Proposition \ref{pro} is complete.
\end{proof}
\section{Proof of Theorem \ref{T1.1}}\label{sec4}
Lemma \ref{Lemma4} and \cite[Propositions 5.3-5.4]{HZ} imply that for a
given Palais-Smale sequence $\{u_{n}\}\subset S_{r}(m)$ for  $I_{\eta}$ with the level $E_{\eta}(m)$, if  $\lim\limits_{n\rightarrow\infty}P_{\eta}(\xi_{n}\star u_{n})=0,$ then the sequence $\{\xi_{n}\star u_{n}\}\subset S_{r}(m)$ is also a Palais-Smale sequence for  $I_{\eta}$ on
the same level, thus we can apply Lemma \ref{Lemma5}. In order to prove our
main result, it remains to verify the condition $E_{\eta}(m)<\frac{s\mathcal{S}^{\frac{N}{2s}}}{N}$ of Proposition \ref{pro}, which is a consequence of Lemma \ref{Lemma3}.

  Therefore, we know that one of the two conclusions of Proposition \ref{pro} must hold.  We show that conclusion (1) does not hold. Indeed, if it did,
  then
   $\widetilde{u}$
   would be
    a nontrivial solution of \eqref{p}, i.e., up to a subsequence $u_{n}\rightharpoonup \widetilde{u}$ converges weakly in $H^{s}(\mathbb{R}^{N})$ but not strongly, where $\widetilde{u}\not\equiv0$
is a solution of the first equation  of \eqref{p} for some $\mu<0,$
and
 $$I_{\eta}(\widetilde{u})<E_{\eta}(m)-\frac{s\mathcal{S}^{\frac{N}{2s}}}{N}<0.$$
However, since $P_{\eta}(\widetilde{u})=0$  by the Pohozaev identity  and $\zeta_{p}p>2,$ we also get
 $$I_{\eta}(\widetilde{u})=\frac{\eta}{2p}(\zeta_{p}p-2)\|u_{n}\|_{p}^{p}+\frac{s}{N}\|u_{n}\|_{2_{s}^{*}}^{2_{s}^{*}}>0,$$
 which is a contradiction.

 Therefore, conclusion (2)
 must hold and $\widetilde{u}$ is a radial normalized solution of \eqref{p} for some $\mu<0$. This completes the proof of Theorem \ref{T1.1}.

\vfill\eject

\section*{Acknowledgements}

Jiabin Zuo was supported by the Guangdong Basic and Applied Basic Research
Foundation (2022A1515110907) and the Project funded by China
Postdoctoral Science Foundation (2023M730767). Yuyou Zhong was supported by Innovative Research Funding Program for Graduate Students of Guangzhou University (2022GDJC-D09).
Du\v{s}an D. Repov\v{s} was supported by the Slovenian Research Agency grants P1-0292, N1-0278, N1-0114, N1-0083, J1-4031, and J1-4001.
 We thank the referee for many important comments and suggestions, which have significantly improved the presentation.

\end{document}